\newtheorem{theorem}{Theorem}[section]
\newtheorem{lemma}[theorem]{Lemma}
\newtheorem{cor}[theorem]{Corollary}
\theoremstyle{definition}
\newtheorem{definition}[theorem]{Definition}
\theoremstyle{remark}
\newtheorem{remark}[theorem]{\bf{Remark}}
\numberwithin{equation}{section}
\begin{document}

\title [$A$-numerical radius : Inequalities and equalities  ]{{$A$-numerical radius : New inequalities and characterization of equalities}}

\author[P. Bhunia and K. Paul] {Pintu Bhunia and Kallol Paul}

\address{(Bhunia) Department of Mathematics, Jadavpur University, Kolkata 700032, West Bengal, India}
\email{pintubhunia5206@gmail.com; 	pbhunia.math.rs@jadavpuruniversity.in}

\address{(Paul) Department of Mathematics, Jadavpur University, Kolkata 700032, West Bengal, India}
\email{kalloldada@gmail.com; kallol.paul@jadavpuruniversity.in}

\noindent \thanks{First  author would like to thank UGC, Govt. of India for the financial support in the form of SRF}
\thanks{}
\thanks{}


\subjclass[2010]{47A12; 47A30; 47A63}
\keywords{A-numerical radius; A-operator seminorm; Semi-Hilbertian space; Inequality}

\maketitle

\begin{abstract}
We develope new lower bounds for the $A$-numerical radius of semi-Hilbertian space operators, and applying these bounds we obtain upper bounds for the $A$-numerical radius of the commutators of operators. The bounds obtained here improve on the existing ones. Further, we provide   characterizations for the equality of the existing $A$-numerical radius inequalities of semi-Hilbertian space operators.
\end{abstract}

\section{Introduction}

\noindent
Let $\mathcal{B}(\mathcal{H})$ denote the $\mathbb{C}^*$-algebra of all bounded linear operators on a complex Hilbert space $\mathcal{H}$ with inner product $ \langle .,.\rangle $ and the corresponding norm $\|.\|.$ For $T\in \mathcal{B}(\mathcal{H}),$   the range  and the kernel of $T $ are denoted by $R(T)$ and $N(T)$, respectively. By $\overline{{R}(T)},$ we denote the norm closure of ${R}(T).$ The Hilbert-adjoint of $T$ is denoted by $T^*.$  Let $A \in \mathcal{B}(\mathcal{H})$ be a positive operator, henceforth we reserve the symbol $A$ for positive operator on $\mathcal{H}.$ Clearly, $A$ induces a positive semidefinite sesquilinear form $\langle . , . \rangle_A: \mathcal{H} \times \mathcal{H} \rightarrow \mathbb{C}$, defined by $ \langle x, y \rangle_A = \langle Ax, y \rangle$ for all $x,y\in \mathcal{H}.$ This sesquilinear form induces a seminorm $\|.\|_A: \mathcal{H}\rightarrow \mathbb{R}^+$, defined by $\|x\|_A = \sqrt{\langle x, x \rangle_A}$  for all $ x \in \mathcal{H}.$
Clearly, $\|.\|_A$ is a norm if and only if $A$ is injective, and $(\mathcal{H}, \|.\|_A)$ is complete if and only if $R(A)$ is closed in $ \mathcal{H}.$ An operator  $T \in  \mathcal{B}(\mathcal{H})$ is said to be $A$-bounded  if there exists $c>0$ such that $\|Tx\|_A \leq c\|x\|_A$ for all $x\in \overline{{R}(A)}$, and in this case
\[\|T\|_A= \sup_{x\in  \overline{{R}(A)},\,{x\neq 0}}   \frac{\|Tx\|_A}{\|x\|_A}=\sup_{x\in  \overline{{R}(A)},\,{\|x\|_A=1}}   {\|Tx\|_A}<+\infty.\] 
Let $\mathcal{B}^A(\mathcal{H})$ denote the collection of all $A$-bounded operators, i.e., 
$ \mathcal{B}^A(\mathcal{H})=\{ T\in \mathcal{B}(\mathcal{H}): \|T\|_A <+ \infty\}.$ It is well known that $\mathcal{B}^A(\mathcal{H})$ is, in general, not  a sub-algebra of $\mathcal{B}(\mathcal{H}).$ Note that  $\|T\|_A=0$ if and only if $ATA=0.$ The $A$-adjoint of $T$, if it exists,  is defined as follows:
\begin{definition}(\cite{Arias1})
For $T \in \mathcal{B}(\mathcal{H})$, an operator $S \in \mathcal{B}(\mathcal{H})$ is said to be an A-adjoint of $T$ if  $\langle Tx, y \rangle_A = \langle x, Sy \rangle_A$ for all $x,y\in \mathcal{H}$, i.e, $AS=T^*A$. 
\end{definition}
\noindent  Let $ \mathcal{B}_A(\mathcal{H})$ denote the collection of all operators in $\mathcal{B}(\mathcal{H})$, which admit A-adjoint.  By Douglas theorem \cite{Douglas}, it follows that 
$$ \mathcal{B}_A(\mathcal{H}) = \{ T \in \mathcal{B}(\mathcal{H}) : R(T^*A) \subseteq R(A) \}.$$
For $T\in \mathcal{B}_A(\mathcal{H})$,  the operator equation  $AX=T^*A$ has a unique solution, denoted by $T^{\sharp_A}$, satisfying ${R}(T^{\sharp_A})\subseteq \overline{{R}(A)}$. Note that $T^{\sharp_A}=A^\dag T^*A$, where $A^\dag$ is the Moore-Penrose inverse of $A$. 
We note that $ \mathcal{B}_A(\mathcal{H}) \left(\subseteq \mathcal{B}^A(\mathcal{H})\right)$  is  a sub-algebra of $ \mathcal{B}(\mathcal{H}) $.
For $T \in \mathcal{B}_A(\mathcal{H})  $,  we have,  $AT^{\sharp_A}= T^*A$ and $ N(T^{\sharp_A}) = N(T^*A).$ If $T \in  \mathcal{B}_A(\mathcal{H})$, then $T^{\sharp_A} \in  \mathcal{B}_A(\mathcal{H})$  and $(T^{\sharp_A})^{\sharp_A} = P_{\overline{R(A)}}TP_{\overline{R(A)}}$ where $P_{\overline{R(A)}}$ is the orthogonal projection onto ${\overline{R(A)}}$. 
	An operator $T\in \mathcal{B}_A(\mathcal{H})$ is said to be A-self-adjoint if $AT$ is self-adjoint, i.e., $AT=T^*A$.
For further study on the A-adjoint operator, we refer to \cite{Arias1}. Note that, for $T,S \in  \mathcal{B}_A(\mathcal{H})$,  $(TS)^{\sharp_A}=S^{\sharp_A}T^{\sharp_A}$, $\|TS\|_A\leq \|T\|_A\|S\|_A$ and $\|Tx\|_A\leq \|T\|_A\|x\|_A$ for all $x\in \mathcal{H}.$
Clearly, for $T \in \mathcal{B}_{A}(\mathcal{H})$, $\|TT^{\sharp_A}\|_A = \|T^{\sharp_A}T\|_A = \|T^{\sharp_A}\|^2_A = \|T\|^2_A. $ 

\noindent For $T \in \mathcal{B}_{A}(\mathcal{H})$,  the A-numerical range of $T$, denoted by $W_A(T)$, is defined as  $$W_A(T) = \{\langle Tx, x \rangle_A: x\in \mathcal{H}, \|x\|_A= 1 \}$$
and the $A$-numerical radius of $T$,  denoted by $w_A(T)$,   is defined as
\begin{eqnarray*}
w_A(T) &=& \sup \{|\langle Tx, x \rangle_A|: x\in \mathcal{H}, \|x\|_A= 1 \}.
\end{eqnarray*}
It was shown in \cite{Zamani} that for $T\in \mathcal{B}_A(\mathcal{H})$, $w_A(T)=\sup_{\theta\in  \mathbb{R}} \left \|\frac{ e^{{\rm i} \theta}T+(e^{{\rm i} \theta}T)^{\sharp_A} }{2} \right \|_A$.
For $T \in \mathcal{B}_{A}(\mathcal{H})$, we have $$\|T\|_A = \sup\{\|Tx\|_A:  \|x\|_A=1\}=\sup \{|\langle Tx, y \rangle_A|  : \|x\|_A=\|y\|_A= 1 \}.$$
\noindent It is well-known that $w_A(.)$ and $\|.\|_A$  are equivalent seminorms on $\mathcal{B}_{A}(\mathcal{H})$  satisfying  the inequality (see \cite[Prop. 2.5]{BFS})
\begin{eqnarray}\label{eqv}
 \frac{1}{2} \|T\|_A \leq w_A(T) \leq \|T\|_A.
\end{eqnarray}
The inequalities in (\ref{eqv}) are sharp (see \cite{Feki1}), $w_A(T)=\frac{1}{2} \|T\|_A $ if $AT^2=0$ and $ w_A(T) = \|T\|_A$ if $AT=T^*A$. An improvement of  (\ref{eqv}) is given  in  \cite{Feki2,Zamani}, which is 
\begin{eqnarray}\label{eqv1}
\frac{1}{4} \left \| T^{\sharp_A}T+TT^{\sharp_A} \right\|_A  \leq w_A^2(T) \leq \frac{1}{2}\left \| T^{\sharp_A}T+TT^{\sharp_A} \right\|_A.
\end{eqnarray}
More refinements in this direction are also given in \cite{P5,P1,P2,P3,P4,Mos,Rout}. Inspired by the inequalities obtained in the article \cite{bp1,P10}, here we obtain new refinements of the first inequalities in (\ref{eqv}) and (\ref{eqv1}). By applying the new refinements  developed here, we obtain upper bounds for the $A$-numerical radius of the commutators of operators. Also, we obtain  characterizations for the equality of first inequalities in  (\ref{eqv}) and (\ref{eqv1}).

\section{{ Lower bounds for $A$-numerical radius}}
\noindent
We begin with the observation that any $T\in \mathcal{B}_A(\mathcal{H})$ can be expressed as $T=\Re_A(T)+{\rm i} \Im_A(T)$,  where  $\Re_A(T)=\frac{T+T^{\sharp_A}}{2}$ and $\Im_A(T)=\frac{T-T^{\sharp_A}}{2{\rm i}}$.
It is easy to verify that $\Re_A(T)$ and $\Im_A(T)$ both are $A$-self-adjoint, i.e.,
$A\Re_A(T)=(\Re_A(T))^*A$ and $A\Im_A(T)=(\Im_A(T))^*A$. Therefore,  $w_A(\Re_A(T))=\|\Re_A(T)\|_A$ and $w_A(\Im_A(T))=\|\Im_A(T)\|_A$. Now, we are in a position to prove our first improvement.

\begin{theorem}\label{th1}
If $T\in \mathcal{B}_A(\mathcal{H})$, then
\[w_A(T)\geq \frac{\|T\|_A}{2}+\frac{| \|\Re_A(T)\|_A-\|\Im_A(T)\|_A|}{2}.\]	
\end{theorem}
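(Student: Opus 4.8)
The plan is to exploit the two elementary facts highlighted just before the statement: for any $T\in\mathcal{B}_A(\mathcal{H})$ the operators $\Re_A(T)$ and $\Im_A(T)$ are $A$-self-adjoint, hence $w_A(\Re_A(T))=\|\Re_A(T)\|_A$ and $w_A(\Im_A(T))=\|\Im_A(T)\|_A$; and the supremum defining $w_A$ is attained (or approached) along unit vectors, with $\langle Tx,x\rangle_A=\langle\Re_A(T)x,x\rangle_A+{\rm i}\langle\Im_A(T)x,x\rangle_A$ splitting into its real and imaginary parts because both pieces give real quadratic forms. First I would fix $\varepsilon>0$ and choose a unit vector $x$ (with $\|x\|_A=1$) for which $|\langle \Re_A(T)x,x\rangle_A|$ is within $\varepsilon$ of $\|\Re_A(T)\|_A$; replacing $x$ by a unimodular multiple we may assume $\langle \Re_A(T)x,x\rangle_A$ is a nonnegative real number close to $\|\Re_A(T)\|_A$. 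Then $|\langle Tx,x\rangle_A|\ge \langle\Re_A(T)x,x\rangle_A$, giving $w_A(T)\ge \|\Re_A(T)\|_A-\varepsilon$, and symmetrically $w_A(T)\ge\|\Im_A(T)\|_A$. This already shows $w_A(T)\ge\max\{\|\Re_A(T)\|_A,\|\Im_A(T)\|_A\}$, which is exactly the right-hand side since $\tfrac12\|T\|_A+\tfrac12\big|\|\Re_A(T)\|_A-\|\Im_A(T)\|_A\big|=\max\{\|\Re_A(T)\|_A,\|\Im_A(T)\|_A\}$ once one knows $\|T\|_A\le\|\Re_A(T)\|_A+\|\Im_A(T)\|_A$.

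So the real content is the identity/inequality bookkeeping on the right-hand side. Writing $a=\|\Re_A(T)\|_A$ and $b=\|\Im_A(T)\|_A$, the asserted bound is $w_A(T)\ge \tfrac12\|T\|_A+\tfrac12|a-b|$. The triangle inequality $T=\Re_A(T)+{\rm i}\,\Im_A(T)$ gives $\|T\|_A\le a+b$, whence $\tfrac12\|T\|_A+\tfrac12|a-b|\le \tfrac12(a+b)+\tfrac12|a-b|=\max\{a,b\}$. Combining this with the lower bound $w_A(T)\ge\max\{a,b\}$ from the previous paragraph closes the argument. I would present it in that order: (i) establish $w_A(T)\ge\|\Re_A(T)\|_A$ and $w_A(T)\ge\|\Im_A(T)\|_A$ via the $A$-self-adjointness and an $\varepsilon$-approximation of the seminorm; (ii) observe $\|T\|_A\le\|\Re_A(T)\|_A+\|\Im_A(T)\|_A$; (iii) conclude by the max-versus-average identity $\max\{a,b\}=\tfrac12(a+b)+\tfrac12|a-b|\ge \tfrac12\|T\|_A+\tfrac12|a-b|$.

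The only delicate point is step (i): one must be careful that the $A$-seminorm genuinely allows the reduction ``rotate $x$ so that $\langle\Re_A(T)x,x\rangle_A\ge 0$''. This is fine because $\Re_A(T)$ being $A$-self-adjoint forces $\langle\Re_A(T)x,x\rangle_A\in\mathbb{R}$ for every $x$, so its absolute value is realized with a definite sign on suitable unit vectors, and $w_A(\Re_A(T))=\|\Re_A(T)\|_A$ guarantees such vectors exist up to $\varepsilon$. One should also note that if $\langle\Re_A(T)x,x\rangle_A$ is negative we replace $x$ by $-x$ (or pass to $-\Re_A(T)=\Re_A(-T)$), so no genuine obstruction arises. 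Everything else is a one-line triangle inequality and the arithmetic identity relating the maximum of two nonnegative numbers to their average plus half their absolute difference; I expect no further obstacle.
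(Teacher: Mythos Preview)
Your proof is correct and follows essentially the same route as the paper: establish $w_A(T)\ge\max\{\|\Re_A(T)\|_A,\|\Im_A(T)\|_A\}$, then use the identity $\max\{a,b\}=\tfrac12(a+b)+\tfrac12|a-b|$ together with $\|T\|_A\le a+b$. The paper's execution of step (i) is slightly cleaner than your $\varepsilon$/rotation argument: from $|\langle Tx,x\rangle_A|^2=|\langle\Re_A(T)x,x\rangle_A|^2+|\langle\Im_A(T)x,x\rangle_A|^2$ one gets $|\langle Tx,x\rangle_A|\ge|\langle\Re_A(T)x,x\rangle_A|$ pointwise, so taking the supremum over $\|x\|_A=1$ immediately yields $w_A(T)\ge\|\Re_A(T)\|_A$ without any approximation or sign adjustment.
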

\begin{proof}
	Let $x\in \mathcal{H}$ with $\|x\|_A=1.$ Then from $T=\Re_A(T)+{\rm i} \Im_A(T)$, we have
	$$|\langle Tx,x\rangle_A|^2=|\langle \Re_A(T)x,x\rangle_A|^2+|\langle \Re_A(T)x,x\rangle_A|^2.$$
	This implies that 
	\[|\langle Tx,x\rangle_A|\geq |\langle \Re_A(T)x,x\rangle_A|\,\,\text{and}\,\, |\langle Tx,x\rangle_A|\geq |\langle \Im_A(T)x,x\rangle_A|.\]
	Considering supremum over $\|x\|_A=1,$ we get
	\[w_A(T)\geq \|\Re_A(T)\|_A\,\,\text{and}\,\, w_A(T)\geq \|\Im_A(T)\|_A.\]
	Hence,
	\begin{eqnarray*}
	w_A(T)&\geq& \max \{\|\Re_A(T)\|_A,\|\Im_A(T)\|_A\}\\
	&=& \frac{\|\Re_A(T)\|_A+\|\Im_A(T)\|_A}{2}+\frac{|\|\Re_A(T)\|_A-\|\Im_A(T)\|_A|}{2}\\
	&\geq& \frac{\|\Re_A(T)+\rm i\Im_A(T)\|_A}{2}+\frac{|\|\Re_A(T)\|_A-\|\Im_A(T)\|_A|}{2}\\
	&=&\frac{\|T\|_A}{2}+\frac{| \|\Re_A(T)\|_A-\|\Im_A(T)\|_A|}{2}.
	\end{eqnarray*}
Thus, we complete the proof.
\end{proof}

\begin{remark}
Clearly, the inequality in Theorem \ref{th1} is sharper than the first inequality in (\ref{eqv}), i.e.,  $w_A(T)\geq \frac{\|T\|_A}{2}$.
\end{remark}

In the next theorem we provide a characterzation for the equality of lower bound of $A$-numerical radius mentioned in  (\ref{eqv}).
\begin{theorem}\label{equality1}
Let $T\in \mathcal{B}_A(\mathcal{H})$. \\
(i) If $w_A(T)= \frac{\|T\|_A}{2}$, then	
\[\|\Re_A(T)\|_A=\|\Im_A(T)\|_A=\frac{\|T\|_A}{2},\]
but the converse is not necessarliy true.\\
(ii) $w_A(T)= \frac{\|T\|_A}{2}$ if and only if $\|\Re_A(e^{\rm i \theta}T)\|_A=\|\Im_A(e^{\rm i \theta}T)\|_A=\frac{\|T\|_A}{2}$ for all $\theta \in \mathbb{R}.$	
\end{theorem}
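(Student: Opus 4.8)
The plan is to build part~(ii) first, since part~(i) is essentially the "only if'' direction of (ii) specialized to $\theta=0$, together with a counterexample.

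For the nontrivial direction of (ii), suppose $w_A(T)=\tfrac12\|T\|_A$. The key observation is that the $A$-numerical radius is rotation-invariant: $w_A(e^{{\rm i}\theta}T)=w_A(T)$ for every $\theta\in\mathbb R$, because multiplying $x$ by a unimodular scalar does not change $\|x\|_A$ and only rotates $\langle Tx,x\rangle_A$. Likewise $\|e^{{\rm i}\theta}T\|_A=\|T\|_A$. Hence $w_A(e^{{\rm i}\theta}T)=\tfrac12\|e^{{\rm i}\theta}T\|_A$ for all $\theta$, so it suffices to show that the single hypothesis $w_A(S)=\tfrac12\|S\|_A$ forces $\|\Re_A(S)\|_A=\|\Im_A(S)\|_A=\tfrac12\|S\|_A$, and then apply this with $S=e^{{\rm i}\theta}T$; note $\|S\|_A=\|T\|_A$. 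Now, from the chain of inequalities inside the proof of Theorem~\ref{th1},
\[
\tfrac12\|T\|_A = w_A(T) \ \geq\ \tfrac{\|\Re_A(T)\|_A+\|\Im_A(T)\|_A}{2}+\tfrac{|\,\|\Re_A(T)\|_A-\|\Im_A(T)\|_A\,|}{2}\ =\ \max\{\|\Re_A(T)\|_A,\|\Im_A(T)\|_A\},
\]
while Theorem~\ref{th1} itself gives the reverse inequality $w_A(T)\geq \tfrac12\|T\|_A+\tfrac12|\,\|\Re_A(T)\|_A-\|\Im_A(T)\|_A\,|$. Since $w_A(T)=\tfrac12\|T\|_A$, the last displayed term must vanish, so $\|\Re_A(T)\|_A=\|\Im_A(T)\|_A$; call this common value $m$. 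Then the $\max$ equals $m$ and we get $m\le \tfrac12\|T\|_A$. For the matching lower bound $m\ge\tfrac12\|T\|_A$, use $\|T\|_A=\|\Re_A(T)+{\rm i}\,\Im_A(T)\|_A\le \|\Re_A(T)\|_A+\|\Im_A(T)\|_A=2m$. Hence $m=\tfrac12\|T\|_A$, which is the claimed equality for $S=T$; applying it to each $S=e^{{\rm i}\theta}T$ proves the "only if'' direction of (ii). The "if'' direction is immediate: if $\|\Re_A(e^{{\rm i}\theta}T)\|_A=\|\Im_A(e^{{\rm i}\theta}T)\|_A=\tfrac12\|T\|_A$ for all $\theta$, then in particular at $\theta=0$ the refined bound of Theorem~\ref{th1} and the triangle inequality squeeze $w_A(T)$; more cleanly, one invokes the formula $w_A(T)=\sup_{\theta}\|\Re_A(e^{{\rm i}\theta}T)\|_A$ quoted in the introduction (Zamani), which directly yields $w_A(T)=\tfrac12\|T\|_A$.

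For part~(i), the implication is exactly the $\theta=0$ instance of the "only if'' direction just proved. For the failure of the converse, I would exhibit a concrete $T$ (working in the classical case $A=I$ suffices, e.g. a $3\times 3$ or $4\times 4$ matrix, or a direct sum such as a nilpotent Jordan block summed with a suitable $2\times 2$ block) for which $\|\Re(T)\|=\|\Im(T)\|=\tfrac12\|T\|$ yet $w(T)>\tfrac12\|T\|$; one picks $T$ so that the hypothesis holds at $\theta=0$ but fails at some other angle, which by part~(ii) is enough to break the equality $w_A(T)=\tfrac12\|T\|_A$.

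The main obstacle is the converse in part~(i): producing an explicit operator where the equality of the real and imaginary $A$-parts holds but is not rotation-stable. Everything else is a repackaging of the inequality chain already established in the proof of Theorem~\ref{th1} together with the rotation-invariance of $w_A$ and Zamani's supremum formula; the only care needed is to keep track that $\|e^{{\rm i}\theta}T\|_A=\|T\|_A$ so that the normalization $\tfrac12\|T\|_A$ is the same for every $\theta$.
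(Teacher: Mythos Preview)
Your proof is correct and essentially identical to the paper's: you use Theorem~\ref{th1} to force $\|\Re_A(T)\|_A=\|\Im_A(T)\|_A$, squeeze via the triangle inequality to pin both at $\tfrac12\|T\|_A$, extend to all $\theta$ by rotation-invariance of $w_A$ and $\|\cdot\|_A$, and invoke Zamani's supremum formula for the ``if'' direction of (ii). The only difference is organizational---you derive (i) from the ``only if'' direction of (ii), whereas the paper proves (i) first and then feeds it into (ii)---and, like you, the paper does not actually exhibit the counterexample promised in (i).
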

\begin{proof}
	(i) It follows from Theorem \ref{th1} that if $w_A(T)= \frac{\|T\|_A}{2}$, then	
	$$\|\Re_A(T)\|_A=\|\Im_A(T)\|_A.$$ Also, we get
		\begin{eqnarray*}
			\|\Re_A(T)\|_A\leq w_A(T)=\frac{\|T\|_A}{2}=\frac{\|\Re_A(T)+\rm i\Im_A(T)\|_A}{2}&\leq& \frac{\|\Re_A(T)\|_A+\|\Im_A(T)\|_A}{2}\\
			&=&\|\Re_A(T)\|_A.
		\end{eqnarray*}
	This implies that $\|\Re_A(T)\|_A=\frac{\|T\|_A}{2}$, and so  $\|\Im_A(T)\|_A=\frac{\|T\|_A}{2}.$\\
	
\noindent (ii)    The ``if" part follows from $w_A(T)=\sup_{\theta\in \mathbb{R}}\|\Re_A(e^{\rm i \theta}T)\|_A$, and so we only need to prove the ``only if" part. Let $w_A(T)= \frac{\|T\|_A}{2}$. Clearly $e^{\rm i \theta}T\in \mathcal{B}_A(\mathcal{H})$  for all $\theta \in \mathbb{R}.$	Now, $w_A(e^{\rm i \theta}T)=w_A(T)$ and $\|e^{\rm i \theta}T\|_A=\|T\|_A$  for all $\theta \in \mathbb{R}.$ Therefore, it follows from (i)  that  $\|\Re_A(e^{\rm i \theta}T)\|_A=\|\Im_A(e^{\rm i \theta}T)\|_A=\frac{\|T\|_A}{2}$ for all $\theta \in \mathbb{R}.$
\end{proof}

Our next improvement of the first inequality in (\ref{eqv1}) reads as follows.

\begin{theorem}\label{th2}
	If $T \in \mathcal{B}_A(\mathcal{H})$, then
	\begin{eqnarray*}
		  w_A(T) &\geq&  \sqrt {\frac{1}{4} \left \| T^{\sharp_A}T+TT^{\sharp_A} \right\|_A + \frac{1}{2}\left|~\|\Re_A(T)\|_A^2-\|\Im_A(T)\|_A^2~~ \right|}.
	\end{eqnarray*}
\end{theorem}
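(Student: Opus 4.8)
The plan is to derive this from the two elementary lower bounds $w_A(T)\ge\|\Re_A(T)\|_A$ and $w_A(T)\ge\|\Im_A(T)\|_A$ — both already obtained inside the proof of Theorem \ref{th1} — combined with the operator identity that rewrites $T^{\sharp_A}T+TT^{\sharp_A}$ in terms of the $A$-real and $A$-imaginary parts of $T$, and the scalar identity $\max\{a,b\}=\tfrac{a+b}{2}+\tfrac{|a-b|}{2}$.

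First I would record that, directly from $\Re_A(T)=\tfrac{T+T^{\sharp_A}}{2}$ and $\Im_A(T)=\tfrac{T-T^{\sharp_A}}{2{\rm i}}$, one has $T=\Re_A(T)+{\rm i}\,\Im_A(T)$ and $T^{\sharp_A}=\Re_A(T)-{\rm i}\,\Im_A(T)$. Multiplying these out, the cross terms $\pm{\rm i}\big(\Re_A(T)\Im_A(T)-\Im_A(T)\Re_A(T)\big)$ cancel, giving
\[
T^{\sharp_A}T+TT^{\sharp_A}=2\big((\Re_A(T))^2+(\Im_A(T))^2\big),
\]
so $\tfrac14\|T^{\sharp_A}T+TT^{\sharp_A}\|_A=\tfrac12\|(\Re_A(T))^2+(\Im_A(T))^2\|_A$. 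Next I would use that $\Re_A(T)$ and $\Im_A(T)$ are $A$-self-adjoint: for an $A$-self-adjoint $S\in\mathcal{B}_A(\mathcal{H})$ one has $\langle S^2x,x\rangle_A=\langle ASx,Sx\rangle=\|Sx\|_A^2\ge 0$, hence $\|S^2\|_A\ge w_A(S^2)=\sup_{\|x\|_A=1}\|Sx\|_A^2=\|S\|_A^2$, while $\|S^2\|_A\le\|S\|_A^2$ by submultiplicativity of $\|\cdot\|_A$; thus $\|(\Re_A(T))^2\|_A=\|\Re_A(T)\|_A^2$ and $\|(\Im_A(T))^2\|_A=\|\Im_A(T)\|_A^2$. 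Then the triangle inequality for $\|\cdot\|_A$ gives
\[
\tfrac12\|(\Re_A(T))^2+(\Im_A(T))^2\|_A\le \tfrac12\big(\|\Re_A(T)\|_A^2+\|\Im_A(T)\|_A^2\big).
\]

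Finally, since $w_A(T)\ge\|\Re_A(T)\|_A$ and $w_A(T)\ge\|\Im_A(T)\|_A$ yield $w_A^2(T)\ge\max\{\|\Re_A(T)\|_A^2,\|\Im_A(T)\|_A^2\}$, the $\max$-identity and the previous display give
\[
w_A^2(T)\ge\frac{\|\Re_A(T)\|_A^2+\|\Im_A(T)\|_A^2}{2}+\frac{\big|\,\|\Re_A(T)\|_A^2-\|\Im_A(T)\|_A^2\,\big|}{2}\ge\frac14\|T^{\sharp_A}T+TT^{\sharp_A}\|_A+\frac12\big|\,\|\Re_A(T)\|_A^2-\|\Im_A(T)\|_A^2\,\big|,
\]
and taking square roots finishes the proof. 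I do not expect a genuine obstacle here: the theorem is essentially a structured repackaging of $w_A(T)\ge\max\{\|\Re_A(T)\|_A,\|\Im_A(T)\|_A\}$. The only points requiring a little care are the seminorm identity $\|(\Re_A(T))^2\|_A=\|\Re_A(T)\|_A^2$ for the $A$-self-adjoint parts, the bookkeeping with $\sharp_A$ when expanding $T^{\sharp_A}T+TT^{\sharp_A}$, and making sure the triangle inequality is used in the harmless direction $\|X+Y\|_A\le\|X\|_A+\|Y\|_A$ so that it meshes correctly with the $\max$-identity rather than against it.
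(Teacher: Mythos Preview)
Your proposal is correct and follows essentially the same route as the paper: both proofs start from $w_A^2(T)\ge\max\{\|\Re_A(T)\|_A^2,\|\Im_A(T)\|_A^2\}$, rewrite the maximum via $\max\{a,b\}=\tfrac{a+b}{2}+\tfrac{|a-b|}{2}$, and then pass to $\tfrac14\|T^{\sharp_A}T+TT^{\sharp_A}\|_A$ using the triangle inequality together with the identity $(\Re_A(T))^2+(\Im_A(T))^2=\tfrac12(T^{\sharp_A}T+TT^{\sharp_A})$. The only difference is cosmetic: you spell out the algebraic identity and justify $\|S^2\|_A=\|S\|_A^2$ for $A$-self-adjoint $S$, whereas the paper uses only the submultiplicativity direction $\|S\|_A^2\ge\|S^2\|_A$ and leaves the identity implicit.
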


\begin{proof}
We have $ w_A(T)\geq \|\Re_A(T)\|_A\,\,\text{and}\,\, w_A(T)\geq \|\Im_A(T)\|_A  $ and so 
	\begin{eqnarray*}
		w^2_A(T)&\geq& \max \left\{\| \Re_A(T) \|^2_A,  \| \Im_A(T) \|_A^2  \right\}\\
		&=& \frac{\| \Re_A(T) \|_A^2+  \| \Im_A(T) \|_A^2}{2}+ \frac{  \left| \| \Re_A(T) \|_A^2- \| \Im_A(T) \|_A^2\right|}{2} \\
		&\geq & \frac{\|(\Re_A(T))^2 \|_A+\|(\Im_A(T))^2\|_A}{2}+ \frac{  \left| \| \Re_A(T) \|_A^2- \| \Im_A(T) \|_A^2\right|}{2} \\
		&\geq& \frac{\|(\Re_A(T))^2 +(\Im_A(T))^2  \|_A}{2}+ \frac{  \left| \| \Re_A(T) \|_A^2- \| \Im_A(T) \|_A^2\right|}{2} \\
		&=& {\frac{1}{4} \left \| T^{\sharp_A}T+TT^{\sharp_A} \right\|_A + \frac{1}{2}\left|~\|\Re_A(T)\|_A^2-\|\Im_A(T)\|_A^2~~ \right|}.
	\end{eqnarray*}
This completes the proof.

\end{proof}

\begin{remark}
Clearly, the inequality in Theorem \ref{th2} is sharper than the first inequality in (\ref{eqv1}), i.e., $w^2_A(T)$ $\geq \frac{1}{4} \left \| T^{\sharp_A}T+TT^{\sharp_A} \right\|_A$.
\end{remark}

 Next, we prove an equivalent condition for $w_A(T)=\sqrt{\frac{1}{4}{\left\| T^{\sharp_A}T+TT^{\sharp_A}\right\|_A}}$.

\begin{theorem}\label{equality2}
	Let $T\in \mathcal{B}_A(\mathcal{H})$. Then, $w_A(T)=\sqrt{\frac{1}{4}{\left\| T^{\sharp_A}T+TT^{\sharp_A}\right\|_A}}$ if and only if $\|\Re_A(e^{\rm i \theta}T)\|^2_A=\|\Im(e^{\rm i \theta}T)\|_A^2=\frac{1}{4}\| T^{\sharp_A}T+TT^{\sharp_A}\|_A$ for all $\theta \in \mathbb{R}$.
\end{theorem}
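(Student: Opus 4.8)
The plan is to mirror the structure of the proof of Theorem~\ref{equality1}(ii): establish the ``if'' direction from the rotation-invariant formula for $w_A$, and then handle the ``only if'' direction by exploiting that $w_A(e^{\mathrm i\theta}T)=w_A(T)$ together with Theorem~\ref{th2}.

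For the ``if'' part, suppose $\|\Re_A(e^{\mathrm i\theta}T)\|_A^2=\tfrac14\|T^{\sharp_A}T+TT^{\sharp_A}\|_A$ for all $\theta\in\mathbb R$. Using the identity $w_A(T)=\sup_{\theta\in\mathbb R}\|\Re_A(e^{\mathrm i\theta}T)\|_A$ recorded in the introduction (from \cite{Zamani}), we immediately get $w_A(T)=\sqrt{\tfrac14\|T^{\sharp_A}T+TT^{\sharp_A}\|_A}$. So this direction is essentially a one-line consequence.

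For the ``only if'' part, assume $w_A(T)=\sqrt{\tfrac14\|T^{\sharp_A}T+TT^{\sharp_A}\|_A}$. First I would fix $\theta\in\mathbb R$ and apply Theorem~\ref{th2} to the operator $e^{\mathrm i\theta}T\in\mathcal B_A(\mathcal H)$; one needs the elementary facts that $w_A(e^{\mathrm i\theta}T)=w_A(T)$ and that $(e^{\mathrm i\theta}T)^{\sharp_A}(e^{\mathrm i\theta}T)+(e^{\mathrm i\theta}T)(e^{\mathrm i\theta}T)^{\sharp_A}=T^{\sharp_A}T+TT^{\sharp_A}$ (the phase cancels since $(e^{\mathrm i\theta}T)^{\sharp_A}=e^{-\mathrm i\theta}T^{\sharp_A}$). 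Theorem~\ref{th2} then gives
\[
\tfrac14\|T^{\sharp_A}T+TT^{\sharp_A}\|_A = w_A(e^{\mathrm i\theta}T)^2 \ge \tfrac14\|T^{\sharp_A}T+TT^{\sharp_A}\|_A + \tfrac12\bigl|\,\|\Re_A(e^{\mathrm i\theta}T)\|_A^2-\|\Im_A(e^{\mathrm i\theta}T)\|_A^2\,\bigr|,
\]
forcing $\|\Re_A(e^{\mathrm i\theta}T)\|_A^2=\|\Im_A(e^{\mathrm i\theta}T)\|_A^2$ for every $\theta$. It remains to identify the common value with $\tfrac14\|T^{\sharp_A}T+TT^{\sharp_A}\|_A$. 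Since $w_A(e^{\mathrm i\theta}T)\ge\|\Re_A(e^{\mathrm i\theta}T)\|_A$, we have $\|\Re_A(e^{\mathrm i\theta}T)\|_A^2\le w_A(T)^2=\tfrac14\|T^{\sharp_A}T+TT^{\sharp_A}\|_A$; for the reverse inequality I would chain through the computation inside the proof of Theorem~\ref{th2}, namely
\[
\tfrac14\|T^{\sharp_A}T+TT^{\sharp_A}\|_A = \tfrac12\|(\Re_A(e^{\mathrm i\theta}T))^2+(\Im_A(e^{\mathrm i\theta}T))^2\|_A \le \tfrac12\bigl(\|\Re_A(e^{\mathrm i\theta}T)\|_A^2+\|\Im_A(e^{\mathrm i\theta}T)\|_A^2\bigr) = \|\Re_A(e^{\mathrm i\theta}T)\|_A^2,
\]
where the first equality uses phase-invariance of $T^{\sharp_A}T+TT^{\sharp_A}$ and the facts $\|\Re_A(S)^2\|_A=\|\Re_A(S)\|_A^2$, valid because $\Re_A(S)$ is $A$-self-adjoint. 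Combining the two inequalities yields $\|\Re_A(e^{\mathrm i\theta}T)\|_A^2=\|\Im_A(e^{\mathrm i\theta}T)\|_A^2=\tfrac14\|T^{\sharp_A}T+TT^{\sharp_A}\|_A$ for all $\theta$, completing the proof.

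The main technical point to be careful about is the identity $\|\Re_A(S)^2\|_A=\|\Re_A(S)\|_A^2$ and the claim $(\Re_A(S))^2+(\Im_A(S))^2=\tfrac12(S^{\sharp_A}S+SS^{\sharp_A})$ with $S=e^{\mathrm i\theta}T$; these are exactly the steps already used (without comment) in the proof of Theorem~\ref{th2}, so the obstacle is not conceptual but merely one of invoking phase-invariance of $S^{\sharp_A}S+SS^{\sharp_A}$ correctly. I expect no genuine difficulty beyond bookkeeping, since the whole argument is a rotation-parametrized version of the equality analysis in Theorem~\ref{equality1}.
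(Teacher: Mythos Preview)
Your proposal is correct and follows essentially the same approach as the paper: both directions rest on the identity $(\Re_A(e^{\mathrm i\theta}T))^2+(\Im_A(e^{\mathrm i\theta}T))^2=\tfrac12(T^{\sharp_A}T+TT^{\sharp_A})$ together with $\|\Re_A(e^{\mathrm i\theta}T)\|_A\le w_A(T)$. The only cosmetic difference is ordering---you first force $\|\Re_A(e^{\mathrm i\theta}T)\|_A=\|\Im_A(e^{\mathrm i\theta}T)\|_A$ via Theorem~\ref{th2} and then identify the common value, whereas the paper first shows the sum $\|\Re_A\|_A^2+\|\Im_A\|_A^2$ is the constant $\tfrac12\|T^{\sharp_A}T+TT^{\sharp_A}\|_A$ and then pins down each summand using $\sup_\theta\|\Re_A(e^{\mathrm i\theta}T)\|_A=w_A(T)$.
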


\begin{proof}
	The ``if" part is trivial,  we only prove the ``only if" part. Let $w^2_A(T)=\frac{1}{4}\| T^{\sharp_A}T+TT^{\sharp_A}\|_A.$ Now,  $\left(\Re_A(e^{\rm i \theta}T)\right )^2+\left(\Im_A(e^{\rm i \theta}T)\right)^2=\frac{ T^{\sharp_A}T+TT^{\sharp_A}}{2}$ for all $\theta\in \mathbb{R}$. Therefore, we have
	\begin{eqnarray*}
		\frac{1}{4}\| T^{\sharp_A}T+TT^{\sharp_A}\|_A &=& \frac{1}{2} \left\|\left(\Re_A(e^{\rm i \theta}T)\right )^2+\left(\Im_A(e^{\rm i \theta}T)\right)^2 \right\|_A \\
		&\leq& \frac{1}{2} \left( \left\| \Re_A(e^{\rm i \theta}T) \right\|_A ^2+\left\| \Im_A(e^{\rm i \theta}T)\right\|_A ^2 \right)\\
		&\leq& w^2_A(T)
		= \frac{1}{4}\| T^{\sharp_A}T+TT^{\sharp_A}\|_A.
	\end{eqnarray*}
	Hence,  $ \left\| \Re_A(e^{\rm i \theta}T) \right\|_A ^2+\left\| \Im_A(e^{\rm i \theta}T)\right\|_A ^2 = \frac{1}{2}\| T^{\sharp_A}T+TT^{\sharp_A}\|_A.$
	Now, $\sup_{\theta\in \mathbb{R}}\left\| \Re_A(e^{\rm i \theta}T) \right\|_A ^2=\frac{1}{4}\| T^{\sharp_A}T+TT^{\sharp_A}\|_A=\sup_{\theta\in \mathbb{R}}\left\| \Im_A(e^{\rm i \theta}T) \right\|_A ^2.$ Therefore, $\|\Re_A(e^{\rm i \theta}T)\|^2_A=\|\Im_A(e^{\rm i \theta}T)\|^2_A$ $=\frac{1}{4}\| T^{\sharp_A}T+TT^{\sharp_A}\|_A$ for all $\theta \in \mathbb{R}$.
\end{proof}

Now we obtain another characterizations for the equalities  $w_A(T)=\frac{1}{2}\|T\|_A$ and  $w_A(T)=\sqrt{\frac{ 1  }{4}{\|T^{\sharp_A}T+TT^{\sharp_A}\|_A} }$. First we need to prove the following lemma.

\begin{lemma}\label{lem2}
	Let $T\in \mathcal{B}_A(\mathcal{H})$. Then, 
	 $\|\Re_A(e^{\rm i \theta }T)\|_A=k$ (i.e., a constant) for all $\theta \in \mathbb{R}$ if and only if
	 ${W_A(T)}$ is a circular disk with center at the origin and radius $k$.
\end{lemma}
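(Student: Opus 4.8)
The plan is to use the characterization $w_A(T)=\sup_{\theta\in\mathbb{R}}\|\Re_A(e^{\mathrm{i}\theta}T)\|_A$ together with the explicit formula $\|\Re_A(e^{\mathrm{i}\theta}T)\|_A=\sup\{|\Re(e^{\mathrm{i}\theta}\langle Tx,x\rangle_A)|:\|x\|_A=1\}$, which holds because $\Re_A(e^{\mathrm{i}\theta}T)$ is $A$-self-adjoint and hence its $A$-operator seminorm equals its $A$-numerical radius. For a fixed direction $\theta$, writing $h(\theta)=\|\Re_A(e^{\mathrm{i}\theta}T)\|_A$, one sees that $h(\theta)$ is exactly the support function of the set $W_A(T)\subseteq\mathbb{C}\cong\mathbb{R}^2$ in the direction $e^{-\mathrm{i}\theta}$ (up to the obvious identification): namely $h(\theta)=\sup_{z\in W_A(T)}|\Re(e^{\mathrm{i}\theta}z)|$, and since $W_A(T)$ is symmetric about the origin under $z\mapsto -z$ (because replacing $x$ by $ix$ negates $\langle Tx,x\rangle_A$... — more carefully, one should note $W_A(T)$ need not be balanced, so I will instead work with $g(\theta)=\sup_{z\in W_A(T)}\Re(e^{\mathrm{i}\theta}z)$ and observe $h(\theta)=\max\{g(\theta),g(\theta+\pi)\}$). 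The key geometric fact I will invoke is that a compact convex set in the plane is a disk centered at the origin of radius $k$ if and only if its support function is the constant $k$; and that $W_A(T)$ is convex (this is the $A$-analogue of the Toeplitz–Hausdorff theorem, available in the literature on semi-Hilbertian spaces).

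The ``if'' direction is immediate: if $W_A(T)$ is the closed disk of radius $k$ about the origin, then for every unit vector $x$ and every $\theta$ we have $|\Re(e^{\mathrm{i}\theta}\langle Tx,x\rangle_A)|\le k$, with equality attained by choosing $\langle Tx,x\rangle_A$ on the boundary in the appropriate direction; hence $\|\Re_A(e^{\mathrm{i}\theta}T)\|_A=k$ for all $\theta$.

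For the ``only if'' direction, suppose $h(\theta)=k$ for all $\theta$. First I would reduce to the closure: let $K=\overline{W_A(T)}$, a compact convex subset of $\mathbb{C}$. Its support function $s_K(\phi)=\sup_{z\in K}\Re(e^{\mathrm{i}\phi}z)$ satisfies $\max\{s_K(\theta),s_K(\theta+\pi)\}=k$ for all $\theta$, equivalently $s_K(\phi)\le k$ for all $\phi$ and for each $\phi$ at least one of $s_K(\phi),s_K(\phi+\pi)$ equals $k$. From $s_K\le k$ we get $K\subseteq\overline{D(0,k)}$. For the reverse inclusion, I would argue by contradiction: if some boundary point of $\overline{D(0,k)}$, say $ke^{-\mathrm{i}\phi_0}$, is not in $K$, then by the separating hyperplane theorem $s_K(\phi_0)<k$, hence by hypothesis $s_K(\phi_0+\pi)=k$; but $s_K(\phi_0+\pi)\le k$ means $K$ lies in the half-plane $\{\Re(e^{\mathrm{i}(\phi_0+\pi)}z)\le k\}=\{\Re(e^{\mathrm{i}\phi_0}z)\ge -k\}$, and combined with $K\subseteq\overline{D(0,k)}$ this forces $-ke^{-\mathrm{i}\phi_0}\in\partial K$ — I then need to propagate this to a full contradiction, e.g. by noting the function $\phi\mapsto s_K(\phi)$ is continuous and applying the constraint on a neighborhood of $\phi_0$ to pin $K$ against the circle from both sides. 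A cleaner route, which I would prefer to execute, is: $s_K\le k$ and the area/diameter comparison — if $K$ touches the circle at a point $p$ with $\|p\|_A$-analogue $=k$, then since $K\subseteq\overline{D(0,k)}$ the only supporting line of $K$ at $p$ is the tangent to the circle, forcing $s_K$ to equal $k$ in that direction; the set of such directions is closed, and the hypothesis (for each $\phi$, $s_K(\phi)=k$ or $s_K(\phi+\pi)=k$) forces it to be all of $[0,2\pi)$, whence $K=\overline{D(0,k)}$ and $W_A(T)$ is the open or closed disk. The main obstacle is precisely this plane-geometry step — turning the ``for each antipodal pair, one of the two support values is maximal'' condition into ``the support function is everywhere constant'' — and handling the subtlety that $W_A(T)$ itself may fail to be closed, so the conclusion should be stated (as it is) for the disk as the numerical range, with the understanding that one passes through $\overline{W_A(T)}$.
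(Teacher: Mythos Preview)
Your reduction to the support function of $K=\overline{W_A(T)}$ is the right framework, and you have correctly isolated the crux: the hypothesis only yields $\max\{s_K(\theta),s_K(\theta+\pi)\}=k$ for every $\theta$, not $s_K(\theta)=k$ for every $\theta$. The trouble is that this gap cannot be closed, because the implication you are trying to establish is false for convex sets in general. Take $K$ to be the closed half-disk $\{z:|z|\le k,\ \Re z\ge 0\}$: for each $\theta$ exactly one of the antipodal points $\pm ke^{-{\rm i}\theta}$ lies in $K$, so $\sup_{z\in K}|\Re(e^{{\rm i}\theta}z)|=k$ for every $\theta$, yet $K$ is not the disk of radius $k$ about the origin. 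Moreover this $K$ is realized as $\overline{W(T)}$ (with $A=I$) by the normal diagonal operator on $\ell^2$ whose diagonal entries enumerate a dense subset of $K$; for that $T$ one has $\|\Re(e^{{\rm i}\theta}T)\|=k$ for all $\theta$ while $W(T)$ sits inside the half-disk. Thus the lemma as stated is false, and in particular your ``cleaner route'' cannot work: the set $\{\phi:s_K(\phi)=k\}$ being closed and meeting every antipodal pair does \emph{not} force it to be all of $[0,2\pi)$, as the half-disk shows directly.

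For comparison, the paper's own proof is much terser than your analysis: from the existence, for each $\theta$, of unit vectors $x_n^\theta$ with $|\Re(e^{{\rm i}\theta}\langle Tx_n^\theta,x_n^\theta\rangle_A)|\to k$, it simply asserts that ``the boundary of $W_A(T)$ must be a circle with center at the origin and radius $k$'' and then invokes convexity of $W_A(T)$. Your more careful treatment in fact exposes that this assertion is unjustified; the same half-disk example shows that all one can legitimately conclude is that for each $\theta$ at least one of $\pm ke^{-{\rm i}\theta}$ lies in $\overline{W_A(T)}$, which does not put the whole circle there. So the obstacle you flag is not a technicality awaiting a patch but a genuine defect in the statement; the downstream applications (where $k$ is further tied to $\|T\|_A$ or to $\|T^{\sharp_A}T+TT^{\sharp_A}\|_A$) may well survive, but the lemma in isolation does not.
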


\begin{proof}
	The ``if" part is trivial, we only prove the ``only if" part.
Let $\left\|\Re_A(e^{\rm i \theta }T)\right \|_A$ $=k$ for all $\theta \in \mathbb{R}$.
	Then, $\sup_{\|x\|_A=1}|\langle \Re_A(e^{\rm i \theta }T)x,x\rangle_A |=k$ for all $\theta \in \mathbb{R}$, i.e., $\sup_{\|x\|_A=1}|Re (e^{\rm i \theta }\langle Tx,x\rangle_A )|=k$ for all $\theta \in \mathbb{R}$. Thus, for each $\theta \in \mathbb{R}$, there exists a sequence $\{x_n^{\theta} \} \subseteq \mathcal{H}$  with $\|x_n^{\theta}\|_A=1$ such that $|Re (e^{\rm i \theta }\langle Tx_{n}^{\theta},x_{n}^{\theta}\rangle_A )|\to k$. This implies that the boundary of  $W_A(T)$ must be a circle with center at the origin and radius $k$. Since $W_A(T)$ is a convex subset of $\mathbb{C}$ (see in \cite[Th. 2.1]{BFS}), so ${W_A(T)}$ is a circular disk with center at the origin and radius $k$.

\end{proof}

\begin{theorem}\label{equality3}
	Let $T\in \mathcal{B}_A(\mathcal{H})$. Then, the following results hold. \\
	(i) $w_A(T)=\frac{1}{2}\|T\|_A$ if and only if ${W_A(T)}$ is a circular disk with center at the origin and radius $\frac{1}{2}\|T\|_A$.\\
	(ii) $w_A(T)=  \sqrt{ \frac{ 1}{4}  \|T^{\sharp_A}T+TT^{\sharp_A}\|_A  }$ if and only if ${W_A(T)}$ is a circular disk with center at the origin and radius $  \sqrt{ \frac{ 1}{4}  \|T^{\sharp_A}T+TT^{\sharp_A}\|_A  }$.
\end{theorem}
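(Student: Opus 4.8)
The plan is to reduce Theorem \ref{equality3} to the characterizations already established in Theorem \ref{equality1}(ii) and Theorem \ref{equality2}, using Lemma \ref{lem2} as the bridge. The point is that both equalities in question have been recast purely in terms of the quantities $\|\Re_A(e^{\mathrm{i}\theta}T)\|_A$, and Lemma \ref{lem2} converts the constancy of exactly that family of quantities into the geometric statement about $W_A(T)$ being a centered disk. So the whole proof is essentially an assembly of existing pieces; no new computation is required beyond bookkeeping.

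For part (i), I would argue as follows. Suppose first that $w_A(T)=\frac{1}{2}\|T\|_A$. By Theorem \ref{equality1}(ii) this is equivalent to $\|\Re_A(e^{\mathrm{i}\theta}T)\|_A=\|\Im_A(e^{\mathrm{i}\theta}T)\|_A=\frac{1}{2}\|T\|_A$ for all $\theta\in\mathbb{R}$; in particular $\|\Re_A(e^{\mathrm{i}\theta}T)\|_A$ is the constant $k=\frac{1}{2}\|T\|_A$, so Lemma \ref{lem2} gives that $W_A(T)$ is a circular disk centered at the origin with radius $\frac{1}{2}\|T\|_A$. Conversely, if $W_A(T)$ is such a disk, then Lemma \ref{lem2} gives $\|\Re_A(e^{\mathrm{i}\theta}T)\|_A=\frac{1}{2}\|T\|_A$ for all $\theta$, and since $w_A(T)=\sup_{\theta\in\mathbb{R}}\|\Re_A(e^{\mathrm{i}\theta}T)\|_A$ (the formula from \cite{Zamani} recalled in the introduction) we immediately get $w_A(T)=\frac{1}{2}\|T\|_A$. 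The only mild subtlety is checking that the radius of the disk forces the value $\frac12\|T\|_A$ rather than some other constant; this is automatic because once $W_A(T)$ is a centered disk of radius $k$ we have $w_A(T)=k$, and comparing with $w_A(T)=\frac12\|T\|_A$ pins $k=\frac12\|T\|_A$, while in the forward direction $k$ was already identified as $\frac12\|T\|_A$.

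Part (ii) is entirely parallel, using Theorem \ref{equality2} in place of Theorem \ref{equality1}(ii). If $w_A(T)=\sqrt{\frac{1}{4}\|T^{\sharp_A}T+TT^{\sharp_A}\|_A}$, then by Theorem \ref{equality2} we have $\|\Re_A(e^{\mathrm{i}\theta}T)\|_A^2=\frac{1}{4}\|T^{\sharp_A}T+TT^{\sharp_A}\|_A$ for all $\theta$, so $\|\Re_A(e^{\mathrm{i}\theta}T)\|_A$ equals the constant $k=\sqrt{\frac14\|T^{\sharp_A}T+TT^{\sharp_A}\|_A}$, and Lemma \ref{lem2} yields the disk description with this radius. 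Conversely, a centered disk of radius $k=\sqrt{\frac14\|T^{\sharp_A}T+TT^{\sharp_A}\|_A}$ gives, via Lemma \ref{lem2} and $w_A(T)=\sup_\theta\|\Re_A(e^{\mathrm{i}\theta}T)\|_A$, that $w_A(T)=k$, which is exactly the claimed equality.

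I do not expect a genuine obstacle here, since all the analytic content was absorbed into the earlier results; the one place to be careful is the logical direction in Lemma \ref{lem2}'s application — it converts \emph{constancy of the family} $\{\|\Re_A(e^{\mathrm{i}\theta}T)\|_A\}_{\theta}$ into the disk statement, so in both parts I must first invoke Theorem \ref{equality1}(ii) or Theorem \ref{equality2} to \emph{obtain} that constancy (the equality $w_A(T)=\tfrac12\|T\|_A$ by itself, via Theorem \ref{th1} alone, only gives the single value $\|\Re_A(T)\|_A=\|\Im_A(T)\|_A=\frac12\|T\|_A$, not constancy over all $\theta$). Once that sequencing is respected the proof is a short chain of equivalences in each part.
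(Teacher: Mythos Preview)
Your proposal is correct and follows exactly the approach the paper takes: it proves (i) and (ii) by combining Theorem~\ref{equality1}(ii) and Theorem~\ref{equality2}, respectively, with Lemma~\ref{lem2}. Your write-up is in fact more detailed than the paper's, which dispatches the proof in a single sentence referencing those results.
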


\begin{proof}
The proof of (i) and (ii) follow from Theorems \ref{equality1} and \ref{equality2}, respectively, by using Lemma \ref{lem2}. 
\end{proof}

Another improvement of the first inequality in (\ref{eqv}) reads as:

\begin{theorem}\label{th3}
	If $ T\in\mathcal{B}_A(\mathcal{H})$, then 
	$$ w_A(T)\geq\frac{ \|T\|_A}{2}+\frac{ \mid \|\Re_A(T)+\Im_A(T)\|_A-\|\Re_A(T)-\Im_A(T)\|_A  \mid}{2\sqrt{2}}.  $$ 
\end{theorem}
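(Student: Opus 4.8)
The plan is to run the same ``$\max$ dominates the average plus half the gap'' trick that proved Theorems~\ref{th1} and~\ref{th2}, but applied to the two $A$-self-adjoint operators $\Re_A(T)+\Im_A(T)$ and $\Re_A(T)-\Im_A(T)$ rather than to $\Re_A(T)$ and $\Im_A(T)$ themselves. First I would observe that $e^{-{\rm i}\pi/4}T = \tfrac{1}{\sqrt2}\bigl(\Re_A(T)+\Im_A(T)\bigr) + \tfrac{{\rm i}}{\sqrt2}\bigl(\Im_A(T)-\Re_A(T)\bigr)$, so that $\Re_A\!\bigl(e^{-{\rm i}\pi/4}T\bigr) = \tfrac{1}{\sqrt2}\bigl(\Re_A(T)+\Im_A(T)\bigr)$ and $\Im_A\!\bigl(e^{-{\rm i}\pi/4}T\bigr) = \tfrac{1}{\sqrt2}\bigl(\Im_A(T)-\Re_A(T)\bigr)$; both of these are $A$-self-adjoint, hence their $A$-seminorms equal their $A$-numerical radii. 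Since $w_A$ is rotation-invariant, $w_A(T) = w_A\!\bigl(e^{-{\rm i}\pi/4}T\bigr)$.

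Next I would apply the argument already used inside the proof of Theorem~\ref{th1}: for any $A$-self-adjoint decomposition $S = \Re_A(S) + {\rm i}\,\Im_A(S)$ one has $w_A(S) \geq \max\{\|\Re_A(S)\|_A, \|\Im_A(S)\|_A\}$, and then
\[
\max\{\|\Re_A(S)\|_A,\|\Im_A(S)\|_A\} = \frac{\|\Re_A(S)\|_A+\|\Im_A(S)\|_A}{2} + \frac{\bigl|\,\|\Re_A(S)\|_A-\|\Im_A(S)\|_A\,\bigr|}{2} \geq \frac{\|S\|_A}{2} + \frac{\bigl|\,\|\Re_A(S)\|_A-\|\Im_A(S)\|_A\,\bigr|}{2}.
\]
Taking $S = e^{-{\rm i}\pi/4}T$ gives $w_A(T) \geq \tfrac12\|T\|_A + \tfrac12\bigl|\,\|\Re_A(S)\|_A - \|\Im_A(S)\|_A\,\bigr|$, and substituting the identifications above turns $\|\Re_A(S)\|_A = \tfrac{1}{\sqrt2}\|\Re_A(T)+\Im_A(T)\|_A$ and $\|\Im_A(S)\|_A = \tfrac{1}{\sqrt2}\|\Re_A(T)-\Im_A(T)\|_A$ (using $\|{-}X\|_A = \|X\|_A$), which yields exactly the claimed bound with the $2\sqrt2$ in the denominator.

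There is essentially no serious obstacle here; the only points requiring a little care are the bookkeeping on the rotation angle (checking that $e^{-{\rm i}\pi/4}$, not $e^{{\rm i}\pi/4}$, produces the $+$ combination in the real part) and the fact that the sign on $\Im_A(e^{-{\rm i}\pi/4}T)$ is immaterial because $\|.\|_A$ is unaffected by it. One should also note that $e^{-{\rm i}\pi/4}T \in \mathcal{B}_A(\mathcal{H})$ and that $\|e^{-{\rm i}\pi/4}T\|_A = \|T\|_A$, both of which are immediate since $\mathcal{B}_A(\mathcal{H})$ is closed under scalar multiplication and $\|.\|_A$ is absolutely homogeneous. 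As with Theorems~\ref{th1} and~\ref{th2}, this bound is automatically at least $\tfrac12\|T\|_A$, so it refines the first inequality in~(\ref{eqv}); that remark can be appended after the proof.
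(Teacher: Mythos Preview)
Your proof is correct. You recognize that Theorem~\ref{th3} is simply Theorem~\ref{th1} applied to the rotated operator $e^{-{\rm i}\pi/4}T$: once one checks that $\Re_A(e^{-{\rm i}\pi/4}T)=\tfrac{1}{\sqrt2}\bigl(\Re_A(T)+\Im_A(T)\bigr)$ and $\Im_A(e^{-{\rm i}\pi/4}T)=\tfrac{1}{\sqrt2}\bigl(\Im_A(T)-\Re_A(T)\bigr)$, the result is immediate from rotation-invariance of $w_A$ and $\|\cdot\|_A$. The paper instead reproves the argument from scratch: it starts from the pointwise identity $|\langle Tx,x\rangle_A|^2=|\langle\Re_A(T)x,x\rangle_A|^2+|\langle\Im_A(T)x,x\rangle_A|^2$, applies $\sqrt{a^2+b^2}\geq(|a|+|b|)/\sqrt2\geq|a\pm b|/\sqrt2$ to obtain $w_A(T)\geq\tfrac{1}{\sqrt2}\|\Re_A(T)\pm\Im_A(T)\|_A$, and then runs the max-equals-average-plus-half-gap identity together with the triangle inequality $\|(\Re_A(T)+\Im_A(T))-{\rm i}(\Re_A(T)-\Im_A(T))\|_A=\|(1-{\rm i})T\|_A=\sqrt2\,\|T\|_A$. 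The two arguments are mathematically the same computation---your factor $e^{-{\rm i}\pi/4}$ is precisely the paper's $(1-{\rm i})/\sqrt2$---but your packaging makes the relationship between Theorems~\ref{th1} and~\ref{th3} transparent and spares you from repeating the max-trick calculation.
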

\begin{proof}
	Let $x \in \mathcal{H}$ with $\|x\|_A=1$. Then, we have
	\begin{eqnarray*}
		\mid\langle Tx,x\rangle_A\mid &=& \sqrt{|\langle \Re_A(T)x,x\rangle_A|^2+|\langle \Im_A(T)x,x\rangle_A|^2}\\
		&\geq& \frac{1}{\sqrt{2}}(\mid\langle \Re_A(T)x,x\rangle_A\mid+\mid\langle \Im_A(T)x,x\rangle\mid)\\ &\geq&\frac{1}{\sqrt{2}}\mid\langle (\Re_A(T)\pm  \Im_A(T))x,x\rangle_A\mid.
	\end{eqnarray*}
	Taking supremum over  $\|x\|_A=1$, we get 
	$$ w_A(T)\geq\frac{1}{\sqrt{2}}\|\Re_A(T)\pm \Im_A(T)\|_A. $$ 
	Therefore, we have 
	\begin{eqnarray*}
		w_A(T)&\geq&\frac{1}{\sqrt{2}}\max\{\|\Re_A(T)+\Im_A(T)\|_A,\|\Re_A(T)-\Im_A(T)\|_A\}\\
		&=&\frac{1}{\sqrt{2}}  \Big  \lbrace\frac{ \|\Re_A(T)+\Im_A(T)\|_A+\|\Re_A(T)-\Im_A(T)\|_A}{2}\\
		&&+\frac{\mid \|\Re_A(T)+\Im_A(T)\|_A-\|\Re_A(T)-\Im_A(T)\|_A\mid}{2}   \Big \rbrace\\ 
		&\geq&\frac{1}{\sqrt{2}}   \Big \lbrace\frac{ \|(\Re_A(T)+\Im_A(T))-\rm i(\Re_A(T)-\Im_A(T))\|_A}{2}\\
		&& +\frac{\mid \|\Re_A(T)+\Im_A(T)\|_A-\|\Re_A(T)-\Im_A(T)\|_A\mid}{2}  \Big \rbrace\\
		&=&\frac{1}{\sqrt{2}}\left\lbrace\frac{ \|(1-\rm i)T\|_A}{2}+\frac{\mid \|\Re_A(T)+\Im_A(T)\|_A-\|\Re_A(T)-\Im_A(T)\|_A\mid}{2}\right\rbrace\\
		&=&\frac{\|T\|_A}{2}+\frac{\mid \|\Re_A(T)+\Im_A(T)\|_A-\|\Re_A(T)-\Im_A(T)\|_A\mid}{2\sqrt{2}},
	\end{eqnarray*} 
	as desired.
\end{proof}

\begin{remark}
(i) Clearly, the inequality in Theorem \ref{th3} is sharper than  the first inequality in (\ref{eqv}), i.e., $w_A(T)\geq \frac{\|T\|_A}{2}$.\\
(ii) The inequalities obtained in Theorem \ref{th1} and Theorem \ref{th3} are not comparable, in general.

\end{remark}

Another refinement of the first inequality in (\ref{eqv1}) reads as follows:

\begin{theorem}\label{th4}
	If $ T\in\mathcal{B}_A(\mathcal{H})$, then 
	$$ w_A(T)\geq   \sqrt{ \frac{\| T^{\sharp_A}T+TT^{\sharp_A}\|_A}{4}+\frac{\mid \|\Re_A(T)+\Im_A(T)\|^2_A-\|\Re_A(T)-\Im_A(T)\|^2_A\mid}{4}. }$$
	\label{thnn}\end{theorem}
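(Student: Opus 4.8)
The plan is to mimic the structure of the proof of Theorem \ref{th2}, but starting from the sharper pointwise estimate that was established inside the proof of Theorem \ref{th3} rather than from $w_A(T)\geq\|\Re_A(T)\|_A$ and $w_A(T)\geq\|\Im_A(T)\|_A$. Concretely, the proof of Theorem \ref{th3} already shows that
$$w_A(T)\geq\frac{1}{\sqrt 2}\,\|\Re_A(T)\pm\Im_A(T)\|_A,$$
so squaring gives $w_A^2(T)\geq\frac12\|\Re_A(T)+\Im_A(T)\|_A^2$ and $w_A^2(T)\geq\frac12\|\Re_A(T)-\Im_A(T)\|_A^2$. Write $P=\Re_A(T)+\Im_A(T)$ and $Q=\Re_A(T)-\Im_A(T)$ for brevity; note $P,Q$ are $A$-self-adjoint (being real linear combinations of the $A$-self-adjoint operators $\Re_A(T),\Im_A(T)$), hence $\|P^2\|_A=\|P\|_A^2$ and $\|Q^2\|_A=\|Q\|_A^2$.

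First I would take the maximum of the two lower bounds and expand it via the identity $\max\{a,b\}=\tfrac{a+b}{2}+\tfrac{|a-b|}{2}$ with $a=\tfrac12\|P\|_A^2$, $b=\tfrac12\|Q\|_A^2$:
$$w_A^2(T)\geq\frac{\|P\|_A^2+\|Q\|_A^2}{4}+\frac{\bigl|\,\|P\|_A^2-\|Q\|_A^2\,\bigr|}{4}.$$
Then for the first term I would use $\|P\|_A^2+\|Q\|_A^2=\|P^2\|_A+\|Q^2\|_A\geq\|P^2+Q^2\|_A$ by the triangle inequality for $\|\cdot\|_A$. The remaining algebraic step is the identity $P^2+Q^2=2\bigl((\Re_A(T))^2+(\Im_A(T))^2\bigr)=T^{\sharp_A}T+TT^{\sharp_A}$, which follows by direct expansion: $(\Re_A(T)+\Im_A(T))^2+(\Re_A(T)-\Im_A(T))^2=2(\Re_A(T))^2+2(\Im_A(T))^2$, and $(\Re_A(T))^2+(\Im_A(T))^2=\tfrac14\bigl((T+T^{\sharp_A})^2+(T-T^{\sharp_A})^2\bigr)=\tfrac12(T^{\sharp_A}T+TT^{\sharp_A})$ — the same identity already invoked in the proof of Theorem \ref{equality2}. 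Substituting gives $\|P^2+Q^2\|_A=\|T^{\sharp_A}T+TT^{\sharp_A}\|_A$, so the first term becomes $\tfrac14\|T^{\sharp_A}T+TT^{\sharp_A}\|_A$, and the second term is exactly $\tfrac14\bigl|\,\|P\|_A^2-\|Q\|_A^2\,\bigr|$. Taking square roots yields the asserted inequality.

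I do not anticipate a serious obstacle: the only non-routine ingredient is recognizing that the pointwise bound from the proof of Theorem \ref{th3} is the right starting point, and the rest is the same "max-to-average-plus-half-difference then triangle inequality" trick used in Theorem \ref{th2}. The one place to be slightly careful is the claim $\|P\|_A^2=\|P^2\|_A$ (and likewise for $Q$): this uses that an $A$-self-adjoint operator $S$ satisfies $w_A(S)=\|S\|_A$ together with $\|S^2\|_A\geq w_A(S^2)$ actually one should argue $\|S\|_A^2=\|S^{\sharp_A}S\|_A$ and, since $S$ is $A$-self-adjoint, $AS^{\sharp_A}=S^*A=AS$ so $S^{\sharp_A}=S$ on $\overline{R(A)}$ and $\|S^{\sharp_A}S\|_A=\|S^2\|_A$; this is a standard fact in the semi-Hilbertian setting and is implicitly used elsewhere in the paper. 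With that in hand the proof is a short computation, which I would present as a single displayed chain of inequalities exactly parallel to the proof of Theorem \ref{th2}.

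\begin{proof}
Let $x\in\mathcal H$ with $\|x\|_A=1$. As established in the proof of Theorem \ref{th3},
$$\mid\langle Tx,x\rangle_A\mid\geq\frac{1}{\sqrt 2}\mid\langle(\Re_A(T)\pm\Im_A(T))x,x\rangle_A\mid,$$
and taking supremum over $\|x\|_A=1$ gives $w_A(T)\geq\frac{1}{\sqrt 2}\|\Re_A(T)\pm\Im_A(T)\|_A$, hence
$$w_A^2(T)\geq\frac12\|\Re_A(T)+\Im_A(T)\|_A^2\quad\text{and}\quad w_A^2(T)\geq\frac12\|\Re_A(T)-\Im_A(T)\|_A^2.$$
Since $\Re_A(T)\pm\Im_A(T)$ are $A$-self-adjoint, we have $\|\Re_A(T)\pm\Im_A(T)\|_A^2=\|(\Re_A(T)\pm\Im_A(T))^2\|_A$. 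Therefore
\begin{eqnarray*}
w_A^2(T)&\geq&\max\left\{\frac12\|\Re_A(T)+\Im_A(T)\|_A^2,\ \frac12\|\Re_A(T)-\Im_A(T)\|_A^2\right\}\\
&=&\frac{\|\Re_A(T)+\Im_A(T)\|_A^2+\|\Re_A(T)-\Im_A(T)\|_A^2}{4}+\frac{\mid\|\Re_A(T)+\Im_A(T)\|_A^2-\|\Re_A(T)-\Im_A(T)\|_A^2\mid}{4}\\
&=&\frac{\|(\Re_A(T)+\Im_A(T))^2\|_A+\|(\Re_A(T)-\Im_A(T))^2\|_A}{4}+\frac{\mid\|\Re_A(T)+\Im_A(T)\|_A^2-\|\Re_A(T)-\Im_A(T)\|_A^2\mid}{4}\\
&\geq&\frac{\|(\Re_A(T)+\Im_A(T))^2+(\Re_A(T)-\Im_A(T))^2\|_A}{4}+\frac{\mid\|\Re_A(T)+\Im_A(T)\|_A^2-\|\Re_A(T)-\Im_A(T)\|_A^2\mid}{4}\\
&=&\frac{\|2(\Re_A(T))^2+2(\Im_A(T))^2\|_A}{4}+\frac{\mid\|\Re_A(T)+\Im_A(T)\|_A^2-\|\Re_A(T)-\Im_A(T)\|_A^2\mid}{4}\\
&=&\frac{\|T^{\sharp_A}T+TT^{\sharp_A}\|_A}{4}+\frac{\mid\|\Re_A(T)+\Im_A(T)\|_A^2-\|\Re_A(T)-\Im_A(T)\|_A^2\mid}{4},
\end{eqnarray*}
where in the last equality we used $(\Re_A(T))^2+(\Im_A(T))^2=\frac12(T^{\sharp_A}T+TT^{\sharp_A})$. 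Taking square roots completes the proof.
\end{proof}
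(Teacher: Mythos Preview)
Your proof is correct and follows essentially the same route as the paper's own proof: start from the bound $w_A(T)\geq\frac{1}{\sqrt2}\|\Re_A(T)\pm\Im_A(T)\|_A$ obtained in Theorem~\ref{th3}, square, apply the $\max\{a,b\}=\frac{a+b}{2}+\frac{|a-b|}{2}$ identity, and then pass from $\|P\|_A^2+\|Q\|_A^2$ to $\|P^2+Q^2\|_A=\|T^{\sharp_A}T+TT^{\sharp_A}\|_A$. The only difference is that you insert an explicit equality $\|P\|_A^2=\|P^2\|_A$ using $A$-self-adjointness, whereas the paper collapses that and the triangle inequality into a single $\geq$ (for which the weaker submultiplicative bound $\|P\|_A^2\geq\|P^2\|_A$ already suffices).
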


\begin{proof}
	Following the proof of Theorem \ref{th3}, we have
	\begin{eqnarray*}
		w^2_A(T)&\geq&	\frac{1}{2}\max\{\|\Re_A(T)+\Im_A(T)\|^2_A,\|\Re_A(T)-\Im_A(T)\|^2_A\}\\
		&=& \frac{1}{2}\Big \lbrace\frac{ \|\Re_A(T)+\Im_A(T)\|^2_A+\|\Re_A(T)-\Im_A(T)\|^2_A}{2}\\
		&&+\frac{\mid \|\Re_A(T)+\Im_A(T)\|^2_A-\|\Re_A(T)-\Im_A(T)\|^2_A\mid}{2}\Big\rbrace\\
		&\geq&\frac{1}{2}\Big \lbrace\frac{ \|(\Re_A(T)+\Im_A(T))^2+(\Re_A(T)-\Im_A(T))^2\|_A}{2}\\
		&&+\frac{\mid \|\Re_A(T)+\Im_A(T)\|^2_A-\|\Re_A(T)-\Im_A(T)\|^2_A\mid}{2}\Big\rbrace\\ &=&\frac{\| T^{\sharp_A}T+TT^{\sharp_A}\|_A}{4}+\frac{\mid \|\Re_A(T)+\Im_A(T)\|^2_A-\|\Re_A(T)-\Im_A(T)\|^2_A\mid}{4}.
	\end{eqnarray*}
	This completes the proof.
\end{proof}

\begin{remark}
(i) 	Clearly, the inequality in Theorem \ref{th4} is sharper than the first inequality in (\ref{eqv1}), i.e.,  $w^2_A(T)\geq \frac{1}{4} \left \| T^{\sharp_A}T+TT^{\sharp_A} \right\|_A$. \\
(ii)  The inequalities obtained in Theorem \ref{th2} and Theorem \ref{th4} are not comparable, in general.
\end{remark}

\section{Applications}

\noindent In this section we obtain new inequalities for the $A$-numerical radius of the generalized commutators of operators by applying Theorems \ref{th2} and \ref{th4}. First we prove the following lemma.

\begin{lemma}\label{lem1}
If $ T,X,Y\in\mathcal{B}_A(\mathcal{H})$, then
\begin{eqnarray*}
	w_A(TX\pm YT)\leq \max \left\{\|X\|_A,\|Y\|_A \right\} \sqrt{2\|T^{\sharp_A}T+TT^{\sharp_A}\|_A}.
\end{eqnarray*}
\end{lemma}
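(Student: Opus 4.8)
The plan is to bound $|\langle (TX \pm YT)x, x\rangle_A|$ for an arbitrary $x \in \mathcal{H}$ with $\|x\|_A = 1$ and then pass to the supremum. First I would use the defining property of the $A$-adjoint to shift $T$ off the vector: since $\langle Ty, z\rangle_A = \langle y, T^{\sharp_A}z\rangle_A$ for all $y,z$, we get $\langle TXx, x\rangle_A = \langle Xx, T^{\sharp_A}x\rangle_A$ and $\langle YTx, x\rangle_A = \langle Tx, Y^{\sharp_A}x\rangle_A$. The triangle inequality together with the Cauchy--Schwarz inequality for the positive semidefinite form $\langle\cdot,\cdot\rangle_A$ then yields
\[ |\langle (TX\pm YT)x, x\rangle_A| \le \|Xx\|_A\,\|T^{\sharp_A}x\|_A + \|Tx\|_A\,\|Y^{\sharp_A}x\|_A. \]
Using $\|Xx\|_A \le \|X\|_A$ and $\|Y^{\sharp_A}x\|_A \le \|Y^{\sharp_A}\|_A = \|Y\|_A$ (valid for $\|x\|_A = 1$, the last equality being part of the $\sharp_A$-calculus recalled in the introduction), the right-hand side is at most $\max\{\|X\|_A, \|Y\|_A\}\bigl(\|T^{\sharp_A}x\|_A + \|Tx\|_A\bigr)$.

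Next I would estimate $\|T^{\sharp_A}x\|_A + \|Tx\|_A \le \sqrt{2}\,\sqrt{\|T^{\sharp_A}x\|_A^2 + \|Tx\|_A^2}$ (from $(a+b)^2 \le 2(a^2+b^2)$) and identify the quantity under the inner square root as a quadratic form. Applying the $A$-adjoint identity once more gives $\|Tx\|_A^2 = \langle Tx,Tx\rangle_A = \langle T^{\sharp_A}Tx, x\rangle_A$ and $\|T^{\sharp_A}x\|_A^2 = \langle T^{\sharp_A}x, T^{\sharp_A}x\rangle_A = \langle T T^{\sharp_A}x, x\rangle_A$, hence
\[ \|T^{\sharp_A}x\|_A^2 + \|Tx\|_A^2 = \langle (T^{\sharp_A}T + TT^{\sharp_A})x, x\rangle_A \le \|T^{\sharp_A}T + TT^{\sharp_A}\|_A, \]
where the final inequality is just the second inequality in (\ref{eqv}) applied to the nonnegative real number $\langle (T^{\sharp_A}T + TT^{\sharp_A})x, x\rangle_A$. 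Combining the two displays gives $|\langle (TX\pm YT)x, x\rangle_A| \le \max\{\|X\|_A, \|Y\|_A\}\sqrt{2\|T^{\sharp_A}T + TT^{\sharp_A}\|_A}$, and taking the supremum over all $x$ with $\|x\|_A = 1$ completes the proof.

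There is no substantial obstacle; the proof is a chain of elementary estimates. The only points demanding a little care are that Cauchy--Schwarz is being used for the possibly degenerate form $\langle\cdot,\cdot\rangle_A$ (legitimate, as the form is positive semidefinite), that the identities $\|Tx\|_A^2 = \langle T^{\sharp_A}Tx,x\rangle_A$, $\|T^{\sharp_A}x\|_A^2 = \langle TT^{\sharp_A}x,x\rangle_A$ and $\|Y^{\sharp_A}\|_A = \|Y\|_A$ are read off from the standard $\sharp_A$-calculus, and that $TX\pm YT \in \mathcal{B}_A(\mathcal{H})$ (so that $w_A(TX\pm YT)$ makes sense), which holds because $\mathcal{B}_A(\mathcal{H})$ is an algebra. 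This lemma will then be the input, via Theorems \ref{th2} and \ref{th4}, for the commutator inequalities of this section.
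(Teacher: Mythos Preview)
Your proof is correct and follows essentially the same approach as the paper: rewrite the two inner products via the $A$-adjoint, apply Cauchy--Schwarz, use $a+b\le\sqrt{2}\sqrt{a^2+b^2}$, and bound $\|Tx\|_A^2+\|T^{\sharp_A}x\|_A^2=\langle(T^{\sharp_A}T+TT^{\sharp_A})x,x\rangle_A$ by $\|T^{\sharp_A}T+TT^{\sharp_A}\|_A$. The only cosmetic difference is that the paper first normalizes to $\max\{\|X\|_A,\|Y\|_A\}\le1$ and then rescales (treating the degenerate case $\max=0$ separately), whereas you pull the factor $\max\{\|X\|_A,\|Y\|_A\}$ out directly, which is slightly cleaner.
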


\begin{proof}
Let $x\in {\mathcal{H}}$ with $\|x\|_A=1$ and  $\max \{ \|X\|_A,\|Y\|_A \} \leq 1$. Then by Cauchy Schwarz inequality, we get 
\begin{eqnarray*}
	|\langle (TX\pm YT)x,x\rangle_A|
	&\leq&  |\langle Xx,T^{\sharp_A}x\rangle_A|+|\langle Tx,Y^{\sharp_A}x\rangle_A| \\
	&\leq& \|T^{\sharp_A}x\|+ \|Tx\| \\ 
	&\leq &  \sqrt{2}\left (\|T^{\sharp_A}x\|_A^2+ \|Tx\|_A^2\right )^{\frac{1}{2}}\\
	&\leq&  \sqrt{2}\|T^{\sharp_A}T+TT^{\sharp_A}\|_A^{\frac{1}{2}}.
\end{eqnarray*}
Therefore, taking supremum over $\|x\|_A=1$, we get 
\begin{eqnarray*}
w_A(TX\pm YT)&\leq& \sqrt{2\|T^{\sharp_A}T+TT^{\sharp_A}\|_A}.
\end{eqnarray*}
If  $ X,Y\in \mathcal{B}_A(\mathcal{H})$ are arbitrary with $\max \{ \|X\|_A,\|Y\|_A \} \neq 0$, then it follows from the above inequality  that 
\begin{eqnarray*}
w_A(TX\pm YT)\leq \max  \left\{\|X\|_A,\|Y\|_A \right\}   \sqrt{2\|T^{\sharp_A}T+TT^{\sharp_A}\|_A}.
\end{eqnarray*}
Also, if $\max \{ \|X\|_A,\|Y\|_A \} = 0$, then the above inequality holds trivially.
This completes the proof.
\end{proof}

\begin{theorem}\label{th5}
	If $T,X,Y \in \mathcal{B}_A(\mathcal{H})$, then
\begin{eqnarray*}
	(i)\,\, w_A(TX\pm YT)\leq 2\sqrt{2}\max  \left\{\|X\|_A,\|Y\|_A \right\}\sqrt{w_A^2(T)- \frac{\left|~\|\Re_A(T)\|_A^2-\|\Im_A(T)\|_A^2~~ \right|}{2}}.
\end{eqnarray*}
and \\
(ii)\, $ w_A(TX\pm YT) $
 $$\leq 2\sqrt{2}\max  \left\{\|X\|_A,\|Y\|_A \right\}\sqrt{w_A^2(T)- \frac{\mid \|\Re_A(T)+\Im_A(T)\|^2_A-\|\Re_A(T)-\Im_A(T)\|^2_A\mid}{4}   }.$$
 \end{theorem}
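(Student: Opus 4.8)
The plan is to combine Lemma \ref{lem1} with the two refined lower bounds for $w_A(T)$ obtained in Theorems \ref{th2} and \ref{th4}. The key observation is that Lemma \ref{lem1} gives an upper bound for $w_A(TX\pm YT)$ in terms of $\|T^{\sharp_A}T+TT^{\sharp_A}\|_A$, while Theorems \ref{th2} and \ref{th4} allow us to bound $\|T^{\sharp_A}T+TT^{\sharp_A}\|_A$ from above by an expression involving $w_A^2(T)$ minus a nonnegative correction term. Substituting the latter into the former will give exactly the claimed inequalities in (i) and (ii).

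First I would record the elementary rearrangements of Theorems \ref{th2} and \ref{th4}. From Theorem \ref{th2}, squaring gives
\[
w_A^2(T)\geq \frac14\|T^{\sharp_A}T+TT^{\sharp_A}\|_A+\frac12\bigl|\,\|\Re_A(T)\|_A^2-\|\Im_A(T)\|_A^2\,\bigr|,
\]
hence
\[
\|T^{\sharp_A}T+TT^{\sharp_A}\|_A\leq 4w_A^2(T)-2\bigl|\,\|\Re_A(T)\|_A^2-\|\Im_A(T)\|_A^2\,\bigr|
= 4\left(w_A^2(T)-\frac{\bigl|\,\|\Re_A(T)\|_A^2-\|\Im_A(T)\|_A^2\,\bigr|}{2}\right).
\]
Similarly, Theorem \ref{th4} yields
\[
\|T^{\sharp_A}T+TT^{\sharp_A}\|_A\leq 4\left(w_A^2(T)-\frac{\bigl|\,\|\Re_A(T)+\Im_A(T)\|_A^2-\|\Re_A(T)-\Im_A(T)\|_A^2\,\bigr|}{4}\right).
\]
In both cases the quantity inside the parentheses is automatically nonnegative, since the left side is nonnegative, so taking square roots is legitimate.

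Next I would plug these into Lemma \ref{lem1}. Lemma \ref{lem1} gives
\[
w_A(TX\pm YT)\leq \max\{\|X\|_A,\|Y\|_A\}\sqrt{2\|T^{\sharp_A}T+TT^{\sharp_A}\|_A}.
\]
Substituting the bound from Theorem \ref{th2}:
\[
w_A(TX\pm YT)\leq \max\{\|X\|_A,\|Y\|_A\}\sqrt{2\cdot 4\left(w_A^2(T)-\tfrac{|\,\|\Re_A(T)\|_A^2-\|\Im_A(T)\|_A^2\,|}{2}\right)}
= 2\sqrt2\,\max\{\|X\|_A,\|Y\|_A\}\sqrt{w_A^2(T)-\tfrac{|\,\|\Re_A(T)\|_A^2-\|\Im_A(T)\|_A^2\,|}{2}},
\]
which is (i). The same substitution with the Theorem \ref{th4} bound gives (ii). I do not anticipate a genuine obstacle here; the only point requiring a word of care is confirming that the radicands $w_A^2(T)-\tfrac12|\cdots|$ and $w_A^2(T)-\tfrac14|\cdots|$ are nonnegative so that the square roots are defined, and this is immediate from Theorems \ref{th2} and \ref{th4} themselves (equivalently, from nonnegativity of $\|T^{\sharp_A}T+TT^{\sharp_A}\|_A$). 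So the proof is a two-line chain of inequalities in each part.
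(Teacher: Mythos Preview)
Your proposal is correct and follows exactly the paper's own approach: the paper's proof simply states that (i) and (ii) follow by applying Theorems \ref{th2} and \ref{th4}, respectively, in Lemma \ref{lem1}. You have merely made the substitution and the nonnegativity of the radicands explicit, which is fine.
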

\begin{proof}
	By applying the inequalities in Theorem \ref{th2} and Theorem \ref{th4} in Lemma \ref{lem1}, we have (i) and (ii), respectively.
\end{proof}

It should be mentioned here that the inequalities (i) and (ii) in Theorem \ref{th5} are not comparable, in general.

Considering $X=Y=S$ in Theorem \ref{th5}, we get the following corollary.

\begin{cor}\label{cor5}
If $T,S \in \mathcal{B}_A(\mathcal{H})$, then
\begin{eqnarray*}
(i)\,\, w_A(TS\pm ST)\leq 2\sqrt{2} \|S\|_A \sqrt{w_A^2(T)- \frac{\left|~\|\Re_A(T)\|_A^2-\|\Im_A(T)\|_A^2~~ \right|}{2}}.
\end{eqnarray*}
and
\begin{eqnarray*}
 (ii)&& w_A(TS\pm ST)\\
& & \leq  2\sqrt{2} \|S\|_A \sqrt{w_A^2(T)- \frac{\mid \|\Re_A(T)+\Im_A(T)\|^2_A-\|\Re_A(T)-\Im_A(T)\|^2_A\mid}{4}  }.
\end{eqnarray*}
	
\end{cor}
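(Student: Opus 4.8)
The plan is to obtain Corollary \ref{cor5} as a direct specialization of Theorem \ref{th5}, with no new computation required beyond substituting $X = Y = S$. First I would observe that Theorem \ref{th5}, parts (i) and (ii), is already established for arbitrary $T, X, Y \in \mathcal{B}_A(\mathcal{H})$; since $\mathcal{B}_A(\mathcal{H})$ is closed under this kind of substitution (indeed $S$ is a single element of $\mathcal{B}_A(\mathcal{H})$), we may legitimately take $X = S$ and $Y = S$.

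Upon making this substitution, the generalized commutator $TX \pm YT$ becomes $TS \pm ST$, and the factor $\max\{\|X\|_A, \|Y\|_A\}$ collapses to $\max\{\|S\|_A, \|S\|_A\} = \|S\|_A$. The two right-hand sides of Theorem \ref{th5} then read
\begin{eqnarray*}
2\sqrt{2}\,\|S\|_A \sqrt{w_A^2(T) - \frac{\left|~\|\Re_A(T)\|_A^2 - \|\Im_A(T)\|_A^2~\right|}{2}}
\end{eqnarray*}
and
\begin{eqnarray*}
2\sqrt{2}\,\|S\|_A \sqrt{w_A^2(T) - \frac{\mid \|\Re_A(T)+\Im_A(T)\|^2_A - \|\Re_A(T)-\Im_A(T)\|^2_A \mid}{4}},
\end{eqnarray*}
which are exactly the bounds claimed in parts (i) and (ii) of the corollary. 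Hence both inequalities follow immediately.

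There is essentially no obstacle here: the only thing worth a sentence of care is that the expressions under the square roots in Theorem \ref{th5} are nonnegative, so that the substituted inequalities are meaningful; but this is already guaranteed by the fact that Theorem \ref{th2} and Theorem \ref{th4}, from which Theorem \ref{th5} was derived, produce genuine lower bounds for $w_A^2(T)$. Thus the proof is simply: \emph{Putting $X = Y = S$ in Theorem \ref{th5} yields (i) and (ii).}
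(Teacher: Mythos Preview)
Your proposal is correct and matches the paper's own proof exactly: the paper simply states that the corollary follows by taking $X = Y = S$ in Theorem~\ref{th5}.
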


\noindent Now, interchanging $T$ and $S$ in Corollary \ref{cor5} (i), we get 
\begin{eqnarray}\label{1}
w_A(TS\pm ST) &\leq& 2\sqrt{2} \min \{\alpha_1,\alpha_2\},
\end{eqnarray}
where 
\begin{eqnarray*}
\alpha_1&=& \|S\|_A \sqrt{w_A^2(T)- \frac{\left|~\|\Re_A(T)\|_A^2-\|\Im_A(T)\|_A^2~~ \right|}{2}},\\
\alpha_2&=& \|T\|_A \sqrt{w_A^2(S)- \frac{\left|~\|\Re_A(S)\|_A^2-\|\Im_A(S)\|_A^2~~ \right|}{2}}.
\end{eqnarray*}

\noindent Also, interchanging $T$ and $S$ in Corollary \ref{cor5} (ii), we get 
\begin{eqnarray}\label{2}
w_A(TS\pm ST) &\leq& 2\sqrt{2} \min \{\beta_1,\beta_2\},
\end{eqnarray}
where 
\begin{eqnarray*}
	\beta_1&=& \|S\|_A \sqrt{w_A^2(T)- \frac{\mid \|\Re_A(T)+\Im_A(T)\|^2_A-\|\Re_A(T)-\Im_A(T)\|^2_A\mid}{4}  },\\
	\beta_2&=& \|T\|_A \sqrt{w_A^2(S)- \frac{\mid \|\Re_A(S)+\Im_A(S)\|^2_A-\|\Re_A(S)-\Im_A(S)\|^2_A\mid}{4}  }.
\end{eqnarray*}

\begin{remark}
	In \cite[Th. 4.2]{Zamani}, Zamani proved that if $T,S \in \mathcal{B}_A(\mathcal{H})$, then
	\begin{eqnarray*}
	w_A(TS\pm ST) &\leq& 2\sqrt{2} \min \{\|T\|_A w_A(S), \|S\|_A w_A(T)    \}.
	\end{eqnarray*}
	Clearly, both the inequalities in (\ref{1}) and (\ref{2}) are stronger than the inequality in \cite[Th. 4.2]{Zamani}.

\end{remark}

\noindent \textbf{Data Availability Statement.}\\
Data sharing not applicable to this article as no datasets were generated or analysed during the current study.

\bibliographystyle{amsplain}

\end{document}